\documentclass{amsart}
\usepackage{amssymb,amsmath, amsfonts, amsrefs, tikz, epsfig, float}
\usepackage{amsthm}
\usepackage{mathrsfs}
\usepackage{enumitem, indentfirst}
\usepackage[fleqn,tbtags]{mathtools}
\usepackage{color}
\frenchspacing

 \newtheorem{theorem}{Theorem}[section]
 \newtheorem{corollary}[theorem]{Corollary}
 
 \newtheorem{proposition}[theorem]{Proposition}

 \theoremstyle{definition}

 \theoremstyle{remark}
 
  \numberwithin{equation}{section}

\renewcommand{\phi}{\varphi}
\renewcommand{\theta}{\vartheta}

\DeclareMathOperator{\tform}{\mathfrak{t}}

\DeclareMathOperator{\wform}{\mathfrak{w}}

\DeclareMathOperator{\mul}{mul}
\DeclarePairedDelimiterX\sipt[2]{(}{)_{\tform}}{#1\,\delimsize\vert\,#2}
\DeclarePairedDelimiterX\sipv[2]{(}{)_{v}}{#1\,\delimsize\vert\,#2}
\DeclarePairedDelimiterX\sipw[2]{(}{)_{w}}{#1\,\delimsize\vert\,#2}

\newcommand{\pair}[2]{\begin{bmatrix}#1 \\  #2 \end{bmatrix}}

\newcommand{\dupR}{\mathbb{R}}

\newcommand{\dupK}{\mathbb{K}}

\newcommand{\dom}{\operatorname{dom}}

\newcommand{\ran}{\operatorname{ran}}

\newcommand{\hil}{\mathcal{H}}

\newcommand{\kil}{\mathcal{K}}

\DeclarePairedDelimiterX\abs[1]{\lvert}{\rvert}{#1}
\DeclarePairedDelimiterX\sip[2]{(}{)}{#1\,\delimsize\vert\,#2}
\DeclarePairedDelimiterX\siptilde[2]{(}{)_{\!_{\widetilde{A}}}}{#1\,\delimsize\vert\,#2}
\DeclarePairedDelimiterX\sipf[2]{(}{)_{f}}{#1\,\delimsize\vert\,#2}
\DeclarePairedDelimiterX\sipg[2]{(}{)_{g}}{#1\,\delimsize\vert\,#2}
\DeclarePairedDelimiterX\siptw[2]{(}{)_{\tform+\wform}}{#1\,\delimsize\vert\,#2}
\DeclarePairedDelimiterX\set[2]{\{}{\}}{#1\,:\,#2}
\DeclarePairedDelimiterX\dual[2]{\langle}{\rangle}{#1,#2}
\DeclarePairedDelimiterX\sipa[2]{(}{)_{\!_A}}{#1\,\delimsize\vert\,#2}
\DeclarePairedDelimiterX\sipc[2]{(}{)_{\!_C}}{#1\,\delimsize\vert\,#2}
\DeclarePairedDelimiterX\sipab[2]{(}{)_{\!_{A+B}}}{#1\,\delimsize\vert\,#2}
\DeclarePairedDelimiterX\sipb[2]{(}{)_{\!_B}}{#1\,\delimsize\vert\,#2}

\newcommand{\opmatrix}[4]{\begin{bmatrix} #1 &  #2 \\   #3& #4\end{bmatrix}}

\allowdisplaybreaks
\begin{document}
\title[Range-kernel characterization of operators]{Range-kernel characterizations of operators which are adjoint of each other}

\author[Zs. Tarcsay]{Zsigmond Tarcsay}

\thanks{The corresponding author Zs. Tarcsay was supported by DAAD-TEMPUS Cooperation Project ``Harmonic Analysis and Extremal Problems'' (grant no. 308015). Project no. ED 18-1-2019-0030 (Application-specific highly reliable IT
solutions)
has been implemented with the support provided from the National Research,
Development and Innovation Fund of Hungary, financed under the Thematic
Excellence
Programme funding scheme.}
\address{%
Department of Applied Analysis and Computational Mathematics\\ E\"otv\"os L. University\\ P\'azm\'any P\'eter s\'et\'any 1/c.\\ Budapest H-1117\\ Hungary}

\email{tarcsay@cs.elte.hu}
\author[Z. Sebesty\'en]{Zolt\'an Sebesty\'en}

\address{%
Department of Applied Analysis and Computational Mathematics\\ E\"otv\"os L. University\\ P\'azm\'any P\'eter s\'et\'any 1/c.\\ Budapest H-1117\\ Hungary}

\email{sebesty@cs.elte.hu}

\subjclass[2010]{Primary 47A5, 47B25}

\keywords{Linear operator, linear relation, adjoint, symmetric operator, self-adjoint operator, operator matrix}

\begin{abstract}
We provide necessary and sufficient conditions for a pair $S,T$ of Hilbert space operators in order that they satisfy  $S^*=T$ and $T^*=S$. As a main result we establish an improvement of von Neumann's classical theorem on the positive self-adjointness of $S^*S$ for  two variables. We also give some new characterizations of self-adjointness and skew-adjointness of operators, not requiring their symmetry or skew-symmetry, respectively. 
\end{abstract}
\dedicatory{Dedicated to Professor Franciszek Hugon Szafraniec on the occasion of his 80th birthday}

\maketitle

\section{Introduction}
The adjoint of an unbounded linear operator was first introduced by John von Neumann in \cite{vonNeumann1930} as a profound ingredient for developing a rigorous mathematical framework for quantum mechanics. By definition, the adjoint of a densely defined linear transformation $S$, acting between two Hilbert spaces, is an operator $T$ with the largest possible domain such that 
\begin{equation}\label{E:1.1}
    \sip{Sx}{y}=\sip{x}{Ty}
\end{equation}
holds for every $x$ from the domain of $S$. The adjoint  operator, denoted by $S^*$, is therefore ``maximal''  in the sense that it extends every operator $T$ that has property \eqref{E:1.1}. On the other hand, every restriction $T$ of $S^*$ fulfills that adjoint relation. Thus, in order to decide whether an operator $T$ is identical with the adjoint of $S$ it seems  reasonable  to restrict ourselves to investigating those operators $T$ that have property \eqref{E:1.1}. This issue was  explored in detail in \cite{SZTZS2019} by means of the operator matrix 
\begin{equation*}
    \opmatrix{I}{-T}{S}{I},
\end{equation*}
cf. also \cites{Popovici,Characterization,Charess, SZTZS2015}. 

In the present paper we continue to examine the conditions under which an operator $T$ is  equal to the adjoint $S^*$ of $S$. Nevertheless, as opposed to the situation treated in the cited papers, we do not assume that $S$ and $T$ are adjoint to each other in the sense of \eqref{E:1.1}. Observe that condition \eqref{E:1.1} is equivalent to identity 
\begin{equation}\label{E:1.2}
    S^*\cap T=T.
\end{equation}
So, still under condition \eqref{E:1.1}, $T$ is equal to the adjoint  of $S$ if and only if $S^*\cap T=S^*$. 
In the present paper we are going to guarantee equality $S^*=T$ by imposing new conditions, weaker than \eqref{E:1.1},   by means of the kernel and range spaces.
 Roughly speaking,  we only require that the intersection of the graphs of $S^*$ and $T$ be, in a sense,  ``large enough''. We also establish a criterion in terms of the norm of the resolvent of the operator matrix 
 \begin{equation*}
     M_{S,T}=\opmatrix{0}{-T}{S}{0}.
 \end{equation*}
As an application we gain some characterizations of self-adjoint, skew-adjoint and unitary operators, thereby generalizing some analogous results by T. Nieminen \cite{Nieminen} (cf. also \cite{Sandovici}).


\section{Preliminaries}
Throughout this paper $\hil$ and $\kil$ will denote  real or complex Hilbert spaces. By an operator $S$ between $\hil$ and $\kil$ we mean a linear map $S:\hil\to\kil$ whose domain $\dom S$ is a linear subspace of $\hil$. We stress that, unless otherwise indicated, linear operators are not assumed to be densely defined. However, the adjoint of such an operator can only be interpreted as a ``multivalued operator'', that is, a linear relation. Therefore we are going to collect here some basic notions and facts on linear relations.  

A linear relation between two Hilbert spaces $\hil$ and $\kil$ is nothing but a linear subspace $S$ of the Cartesian product $\hil\times\kil$, respectively, a closed linear relation is just a closed subspace of $\hil\times \kil$. To a linear relation $S$ we associate the following subspaces
\begin{align*}
\dom S&=\set{h\in\hil}{(h,k)\in S} &\ran S&=\set{k\in\kil}{(h,k)\in S}\\
\ker S&=\set{h\in\hil}{(h,0)\in S}  &\mul S&=\set{k\in\kil}{(0,k)\in S},
\end{align*}
which are referred to as the domain, range, kernel and multivalued part of $S$, respectively. Every linear operator when identified with its graph is a linear relation with trivial multivalued part. Conversely, a linear relation whose multivalued part consists only of the vector $0$ is (the graph of) an operator.

A notable advantage of linear relations, compared to operators, lies in the fact that one might define the adjoint without any further assumption on the domain. Namely, the adjoint of a linear relation $S$ will be again a linear relation $S^*$ between $\kil$ and $\hil$,  given by 
\begin{align*}
S^*\coloneqq V( S)^{\perp}.
\end{align*}
Here, $V:\hil\times \kil\to\kil\times\hil$ stands for the `flip' operator $V(h,k)\coloneqq (k,-h)$. It is seen immediately that  $S^*$ is automatically a closed linear relation and satisfies the useful identity 
\begin{align*}
\overline{S}=S^{**}(=:(S^*)^*).
\end{align*}
On the other hand, a closed linear relation $S$ entails  the following orthogonal decomposition of the product Hilbert space $\kil\times \hil$:
\begin{equation*}
    S^*\oplus V(S)=\kil\times \hil.
\end{equation*}
Note that another equivalent definition of $S^*$ is obtained in terms of the inner product as follows:  
\begin{align*}
S^*=\set{(k',h')\in\kil\times\hil}{\sip{k}{k'}=\sip{h}{h'}~\mbox{for all $(h,k)\in S$}}.
\end{align*}
 In other words, $(k',h')\in S^*$ holds if and only if 
\begin{equation*}
    \sip{k}{k'}=\sip{h}{h'}\qquad \forall (h,k)\in S.
\end{equation*}
In particular, if $S$ is a densely defined operator then the relation $S^*$ coincides with the usual adjoint operator of $S$. Recall also the dual identities
\begin{align*}
    \ker S^*=(\ran S)^{\perp},\qquad \mul S^*=(\dom S)^{\perp},
\end{align*}
where the second equality tells us that the adjoint of a densely defined linear relation is always a (single valued) operator. For further information on linear relation we refer the reader to \cites{Arens,BehrndtHassideSnoo,Hassi2009a,Schmudgen}.
\section{Operators which are adjoint of each other}

R. Arens \cite{Arens} characterized the equality $S=T$ of two linear relations in terms of their kernel and range (see Corollary \ref{C:Arens2}). Below we provide a similar characterization of $S\subset T$. Observe that the intersection $S\cap T$ of the linear relations $S$ and $T$ is again a linear relation, but this is not true for their union $S\cup T$ as it is not a linear subspace in general. The linear span of $S\cup T$ will be denoted by $S\vee T$, which in turn is a linear relation. 
\begin{proposition}\label{P:Arens}
Let $S$ and $T$ be linear relations between two vector spaces. Then the following three statements are equivalent:
\begin{enumerate}[label=\textup{(\roman*)}]
\item $S\subset T$,
\item $\ker S\subset \ker T$ and $\ran S\subset \ran (S\cap T)$,
\item $\ran S\subset \ran T$ and $\ker (S\vee T)\subset \ker T$.
\end{enumerate}
\end{proposition}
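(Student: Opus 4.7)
The plan is to prove the two equivalences (i)$\Leftrightarrow$(ii) and (i)$\Leftrightarrow$(iii) separately. In both cases the forward direction is essentially immediate: if $S\subset T$ then $S\cap T=S$ and $S\vee T=T$, so the range and kernel inclusions in (ii) and (iii) reduce to the trivial inclusions $\ran S\subset \ran S$, $\ker S\subset \ker T$, $\ran S\subset \ran T$, and $\ker T\subset \ker T$.

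For (ii)$\Rightarrow$(i) I would argue by a decomposition of an arbitrary element $(h,k)\in S$. Using $\ran S\subset \ran(S\cap T)$ I can pick $h_1$ with $(h_1,k)\in S\cap T$. Subtracting, $(h-h_1,0)=(h,k)-(h_1,k)$ lies in $S$ (since both summands do), which by the hypothesis $\ker S\subset \ker T$ means $(h-h_1,0)\in T$. Since also $(h_1,k)\in T$, the sum $(h,k)$ belongs to $T$.

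The argument for (iii)$\Rightarrow$(i) follows the same template but shifts the asymmetry. Given $(h,k)\in S$, the hypothesis $\ran S\subset \ran T$ yields $h_2$ with $(h_2,k)\in T$. The difference $(h-h_2,0)$ is no longer guaranteed to lie in $S$ or in $T$ individually, but it is the difference of one element of $S$ and one element of $T$, hence lies in $S\vee T$. The hypothesis $\ker(S\vee T)\subset \ker T$ then places $(h-h_2,0)$ in $T$, and addition with $(h_2,k)$ completes the proof.

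I do not foresee a serious obstacle. The only point that requires care is the second case: one must resist the temptation to try to push $(h-h_2,0)$ into $S$ or $T$ directly, and instead observe that it naturally belongs to the linear span $S\vee T$, which is precisely why the hypothesis is formulated with $\ker(S\vee T)$ rather than $\ker S$ or $\ker T$.
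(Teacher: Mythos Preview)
Your proposal is correct and follows essentially the same argument as the paper's proof: the same decomposition of an arbitrary $(h,k)\in S$ via an auxiliary element in $S\cap T$ (for (ii)$\Rightarrow$(i)) or in $T$ (for (iii)$\Rightarrow$(i)), with the kernel hypothesis handling the difference. The only cosmetic difference is notation.
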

\begin{proof}
It is clear that (i) implies both (ii) and (iii).  Suppose now (ii) and let $(h,k)\in S$ then there exists $u$ with $(u,k)\in T\cap S$. Consequently, $(h-u,0)\in S$, i.e., $h-u\in \ker S\subset \ker T$. Hence 
\begin{align*}
(h,k)=(h-u,0)+(u,k)\in T+T\subset  T,
\end{align*}
which yields $S\subset T$, so (ii) implies (i). Finally, assume (iii) and take $(h,k)\in S$. Then $(u,k)\in T$ for some $u$ and hence $(h-u,0)\in S\vee T$, i.e., $h-u\in \ker T$. Consequently, 
\begin{equation*}
    (h,k)=(h,k)=(h-u,0)+(u,k)\in  T,
\end{equation*}
which yields $S\subset T$. 
\end{proof}
\begin{corollary}\label{C:Arens1}
Let $S$ and $T$ be two linear relations between the vector spaces. The following three statements are equivalent:
\begin{enumerate}[label=\textup{(\roman*)}]
    \item $S=T$,
    \item $\ker S=\ker T$ and $\ran S+\ran T\subseteq \ran (S\cap T)$,
    \item $\ran S=\ran T$ and $\ker (S\vee T)\subseteq \ker (S\cap T)$.
\end{enumerate}
\end{corollary}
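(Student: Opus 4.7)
The plan is to derive the corollary directly from Proposition \ref{P:Arens} by noting that $S=T$ is equivalent to the two inclusions $S\subset T$ and $T\subset S$, and then applying each of the two characterizations of inclusion in Proposition \ref{P:Arens} in both directions.

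First I would handle the implications (i)$\Rightarrow$(ii) and (i)$\Rightarrow$(iii), which are immediate: if $S=T$, then $S\cap T=S\vee T=S=T$, so every kernel and every range appearing in (ii) and (iii) coincides with $\ker S$ or $\ran S$ respectively, and the stated inclusions become equalities.

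Next, for (ii)$\Rightarrow$(i), I would apply Proposition \ref{P:Arens} part (ii) twice. The hypothesis gives $\ker S\subset \ker T$ together with $\ran S \subset \ran(S\cap T)$, so Proposition \ref{P:Arens} yields $S\subset T$. Swapping the roles of $S$ and $T$, the hypothesis also gives $\ker T\subset \ker S$ and $\ran T\subset \ran(S\cap T)=\ran(T\cap S)$, so the same proposition yields $T\subset S$. Hence $S=T$.

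Finally, for (iii)$\Rightarrow$(i), I would apply Proposition \ref{P:Arens} part (iii) twice. From $\ran S=\ran T$ we get both $\ran S\subset \ran T$ and $\ran T\subset \ran S$. From $\ker(S\vee T)\subset \ker(S\cap T)$, and the evident containments $\ker(S\cap T)\subset \ker S$ and $\ker(S\cap T)\subset \ker T$, we get $\ker(S\vee T)\subset \ker T$ and $\ker(T\vee S)=\ker(S\vee T)\subset \ker S$. Proposition \ref{P:Arens} (iii) then yields $S\subset T$ and $T\subset S$, so $S=T$. There is no real obstacle here; the only thing to be slightly careful about is to exploit the symmetry of $S\cap T$ and $S\vee T$ in $S$ and $T$ so that the second application of Proposition \ref{P:Arens} goes through with the roles reversed.
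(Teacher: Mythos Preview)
Your proposal is correct and is exactly the argument the paper has in mind: the corollary is stated without proof immediately after Proposition~\ref{P:Arens}, and the intended derivation is to apply that proposition twice, once for each inclusion, using the symmetry of $S\cap T$ and $S\vee T$.
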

\begin{corollary}\label{C:Arens2}
Let $S$ and $T$ be linear relations between two vector spaces such that $S\subset T$. Then the following assertions are equivalent:
\begin{enumerate}[label=\textup{(\roman*)}]
    \item $S=T$,
    \item $\ker S=\ker T$ and $\ran S=\ran T$.
\end{enumerate}
\end{corollary}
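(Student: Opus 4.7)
The plan is to deduce this directly from Proposition \ref{P:Arens}, since the hypothesis $S\subset T$ drastically simplifies the conditions. Only the nontrivial direction (ii)$\Rightarrow$(i) requires work.

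My approach is to apply Proposition \ref{P:Arens} with the roles of $S$ and $T$ interchanged in order to establish the reverse inclusion $T\subset S$. Observe that under $S\subset T$, the intersection $T\cap S$ equals $S$, so the condition
\begin{equation*}
    \ran T\subset \ran(T\cap S)
\end{equation*}
from part (ii) of Proposition \ref{P:Arens} (applied to the pair $T,S$) becomes simply $\ran T\subset \ran S$, which holds by assumption. Combined with the hypothesis $\ker T\subset \ker S$, part (ii) of Proposition \ref{P:Arens} yields $T\subset S$, and together with the given inclusion $S\subset T$ we conclude $S=T$.

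Alternatively one can invoke Corollary \ref{C:Arens1} directly: under $S\subset T$ we have $S\cap T=S$ and $S\vee T=T$, so the condition $\ran S+\ran T\subseteq \ran(S\cap T)$ reduces to $\ran T\subseteq \ran S$, which is given. There is no real obstacle here — the statement is essentially a specialization of the preceding proposition to the situation where one inclusion is already known, so the proof amounts to bookkeeping: one just has to pick the correct direction in which to apply Proposition \ref{P:Arens} (namely to $T,S$ rather than $S,T$) so that the hypothesis $S\subset T$ collapses $T\cap S$ to $S$.
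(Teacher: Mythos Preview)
Your argument is correct and matches the paper's treatment: the paper states Corollary \ref{C:Arens2} without proof, as it is an immediate specialization of Proposition \ref{P:Arens} (or equivalently Corollary \ref{C:Arens1}) once one notes that $S\subset T$ forces $S\cap T=S$. Your explicit verification that applying Proposition \ref{P:Arens} to the pair $(T,S)$ yields $T\subset S$ is exactly the intended reasoning.
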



In \cite{Stone}*{Theorem 2.9}  M. H. Stone established a simple yet effective sufficient condition for an operator to be self-adjoint: a densely defined symmetric operator $S$ is necessarily self-adjoint provided it is surjective. In  that case, it is invertible with bounded and self-adjoint inverse due to the Hellinger--Toeplitz theorem. Here, density of the domain can be dropped from the hypotheses: a surjective symmetric operator is automatically densely defined (see also \cite{SZTZS2019}*{Corollary 6.7} and \cite{Squareroot}*{Lemma 2.1}). 

Below we establish a generalization of Stone's result for a pair of operators.   
\begin{proposition}\label{P:genStone}
Let $\hil,\kil$ be real or complex Hilbert spaces  and let $S:\hil\to\kil$ and $T:\kil\to\hil$ be (not necessarily densely defined or closed) linear operators such that 
\begin{equation*}
    \ran (S\cap T^*)=\kil\qquad\mbox{and}\qquad \ran (T\cap S^*)=\hil.
\end{equation*}
Then $S$ and $T$ are both densely defined operators such that  $S^*=T$ and $T^*=S$.
\end{proposition}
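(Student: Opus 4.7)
The plan is to first establish the two inclusions $S\subset T^*$ and $T\subset S^*$, then promote them to equalities via Corollary \ref{C:Arens2}, and finally deduce density of the domains from the fact that the adjoints are single-valued.

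For the inclusion $S\subset T^*$, I would apply Proposition \ref{P:Arens}(ii). The range condition is essentially free: $\ran S\subset \kil = \ran(S\cap T^*)$. The kernel condition $\ker S\subset \ker T^*$ I plan to deduce by actually proving the stronger statement $\ker S=\{0\}$. Take $h\in\ker S$; by the surjectivity hypothesis on $T\cap S^*$ there exists $k\in\kil$ with $(k,h)\in T\cap S^*$, which in particular gives the adjoint identity $\sip{Sh'}{k}=\sip{h'}{h}$ for every $h'\in\dom S$. Specializing $h'=h$ and using $Sh=0$ forces $\|h\|^2=0$. The symmetric argument, using surjectivity of $S\cap T^*$ applied to an element of $\ker T$, yields $\ker T=\{0\}$ and the companion inclusion $T\subset S^*$.

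Next, I would combine these inclusions with the surjectivity hypotheses to read off $\ran S=\kil$ and $\ran T=\hil$: indeed $\kil=\ran(S\cap T^*)\subset \ran S \subset \kil$ because $S\subset T^*\subset \hil\times\kil$, and analogously for $T$. Now Corollary \ref{C:Arens2} applies to the inclusion $T\subset S^*$: the kernels match since $\ker T=\{0\}$ and $\ker S^*=(\ran S)^{\perp}=\{0\}$, while the ranges match since $\hil=\ran T\subset \ran S^*\subset \hil$. This gives $S^*=T$, and the analogous argument applied to $S\subset T^*$ gives $T^*=S$.

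The density of the domains then drops out for free from the multivalued-part identity $\mul S^*=(\dom S)^{\perp}$ (and its analogue for $T^*$) recalled in the Preliminaries: since $S^*=T$ and $T^*=S$ are genuine operators, their multivalued parts are trivial, forcing $\dom S$ and $\dom T$ to be dense. I do not anticipate any serious obstacle; the only care needed is bookkeeping the two sides $\hil\times\kil$ and $\kil\times\hil$ consistently when writing the adjoint relation, and remembering that the "kernel trick'' applied to $h\in\ker S$ must invoke surjectivity of the \emph{other} intersection $T\cap S^*$, not $S\cap T^*$.
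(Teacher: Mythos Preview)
Your proposal is correct and follows essentially the same route as the paper: both arguments show $\ker S=\ker T=\{0\}$ via the adjoint relation combined with the surjectivity hypotheses, and then invoke the Arens-type range--kernel criteria from Section~3 to conclude $S=T^*$ and $T=S^*$. The only structural difference is the order of the steps: the paper first establishes density of the domains (by showing $\dom S_0$ and $\dom T_0$ are dense) and then applies Corollary~\ref{C:Arens1} in one stroke, whereas you defer density to the end (deducing it from $\mul S^*=(\dom S)^\perp=\{0\}$) and split the Arens argument into Proposition~\ref{P:Arens} followed by Corollary~\ref{C:Arens2}. This is a harmless rearrangement.
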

\begin{proof}
For brevity, introduce the following notations
\begin{equation*}
    S_0\coloneqq S\cap T^*,\qquad T_0\coloneqq T\cap S^*.
\end{equation*}
Observe that $S_0$ and $T_0$ are adjoint to each other in the sense that
\begin{equation*}
    \sip{S_0x}{y}=\sip{x}{T_0y},\qquad x\in\dom S_0,y\in\dom T_0.
\end{equation*} 
We claim that $S_0$ and $T_0$ are densely defined: let $z\in(\dom S_0)^{\perp}$, then by surjectivity, $z=T_0v$ for some $v\in \dom T_0$. Hence 
 
\begin{equation*}
    0=\sip xz=\sip{x}{T_0v}=\sip{S_0x}{v},\qquad x\in\dom S_0,
\end{equation*}
which implies $v=0$ and also $z=0$. The same argument shows that $T_0$ is densely defined too. 
We see now that $S$ and $T^*$ are densely defined operators such that 
\begin{equation*}
    \ker S\subseteq (\ran S^*)^\perp=\{0\}, \qquad \ker T^*= (\ran T)^\perp=\{0\},
\end{equation*}
and $\ran (S\cap T^*)=\kil$. Corollary \ref{C:Arens1} applied to $S$ and $T^*$ implies that $S=T^*$. The same argument yields equality $S^*=T$.
\end{proof}

\begin{corollary}
Let $S:\hil\to\kil$ and $T:\kil\to\hil$ be (not necessarily densely defined) surjective operators  such that 
\begin{equation*}
    \sip{Sx}{y}=\sip{x}{Ty},\qquad x\in\dom S,y\in\dom T.
\end{equation*}
Then $S$ and $T$ are both densely defined operators such that $S^*=T$ and $T^*=S$.
\end{corollary}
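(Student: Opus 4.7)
The plan is to recognize this corollary as a direct specialization of Proposition \ref{P:genStone}. The crucial observation is that the pairing condition $\sip{Sx}{y}=\sip{x}{Ty}$ for all $x\in\dom S$, $y\in\dom T$ is precisely the inclusion $T\subseteq S^*$ (equivalently $S\subseteq T^*$), interpreted in the language of linear relations via the definition of the adjoint by the inner product. So the first step is just to unpack that definition.

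Once that inclusion is in hand, I would compute the intersections appearing in Proposition \ref{P:genStone}: from $S\subseteq T^*$ we immediately get $S\cap T^*=S$, and from $T\subseteq S^*$ we get $T\cap S^*=T$. Therefore
\begin{equation*}
\ran(S\cap T^*)=\ran S=\kil\qquad\text{and}\qquad \ran(T\cap S^*)=\ran T=\hil,
\end{equation*}
using the surjectivity hypothesis on $S$ and $T$.

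With both range conditions of Proposition \ref{P:genStone} verified, applying that proposition yields directly that $S$ and $T$ are densely defined and $S^*=T$, $T^*=S$. There is no real obstacle here — the entire argument is a verification that the hypotheses of the preceding proposition hold in this simpler situation; the deeper work (the density argument from surjectivity and the application of Corollary \ref{C:Arens1}) is already carried out in the proof of Proposition \ref{P:genStone}.
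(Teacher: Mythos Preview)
Your proposal is correct and matches the paper's intended approach: the corollary is stated in the paper without proof precisely because it is an immediate specialization of Proposition~\ref{P:genStone}, via the observation that the adjointness relation $\sip{Sx}{y}=\sip{x}{Ty}$ gives $S\cap T^*=S$ and $T\cap S^*=T$, so surjectivity of $S$ and $T$ yields the range hypotheses of that proposition.
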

From Proposition \ref{P:genStone} we  gain a sufficient condition of self-adjointness without the assumptions of being symmetric or densely defined:
\begin{corollary}
Let $\hil$ be a Hilbert space and let $S:\hil\to\hil$ be a linear operator such that $\ran (S\cap S^*)=\hil$. Then $S$ is densely defined and self-adjoint.
\end{corollary}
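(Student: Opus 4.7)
The plan is to derive this as a direct specialization of Proposition \ref{P:genStone} by taking $\kil = \hil$ and $T = S$. With these choices, both surjectivity hypotheses of the proposition collapse into the single condition $\ran(S \cap S^*) = \hil$, which is precisely what we are assuming. The proposition then yields that $S$ (playing both roles) is densely defined and satisfies $S^* = T = S$, which is exactly self-adjointness together with dense domain.

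So the entire proof is essentially one line: apply Proposition \ref{P:genStone} with $T\coloneqq S$ and $\kil\coloneqq \hil$. The only thing worth checking is that nothing in the statement of Proposition \ref{P:genStone} implicitly requires $S$ and $T$ to be \emph{distinct} or to act between genuinely different Hilbert spaces; inspection of that proof shows it does not.

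There is no real obstacle here, since the heavy lifting was done in Proposition \ref{P:genStone}. If one wanted to make the argument more self-contained, one could briefly recall the mechanism: the hypothesis $\ran(S\cap S^*)=\hil$ forces $S\cap S^*$ to be densely defined (if $z\perp \dom(S\cap S^*)$, write $z = (S\cap S^*)v$ by surjectivity and pair $z$ with a vector from $\dom(S\cap S^*)$ to conclude $v=0$, hence $z=0$), and hence $S$ itself is densely defined; then $\ker S\subseteq (\ran S^*)^\perp = \{0\}$ and $\ker S^* = (\ran S)^\perp = \{0\}$, so Corollary \ref{C:Arens1} applied to $S$ and $S^*$ (whose intersection has full range) gives $S = S^*$. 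But since this is exactly the specialization of the proof of Proposition \ref{P:genStone}, the cleanest presentation is simply to cite it.
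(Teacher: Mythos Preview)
Your proposal is correct and matches the paper's own proof exactly: the paper simply writes ``Apply Proposition \ref{P:genStone} with $T\coloneqq S$.'' Your additional unpacking of the mechanism is accurate but unnecessary for the citation-based proof.
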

\begin{proof}
Apply Proposition \ref{P:genStone} with $T\coloneqq S$. 
\end{proof}
Clearly, if $S$ is a symmetric operator then $S\cap S^*=S$. Hence we retrieve   \cite{Stone}*{Theorem 2.9} by M. H. Stone as an immediate consequence (cf. also \cite{SZTZS2019}*{Corollary 6.7}): 
\begin{corollary}
Every surjective symmetric operator is densely defined and self-adjoint. 
\end{corollary}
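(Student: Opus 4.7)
The plan is to derive this directly from the previous corollary (the one stating that $\ran(S\cap S^*)=\hil$ forces $S$ to be densely defined and self-adjoint), by reducing the symmetric-plus-surjective hypothesis to that range condition.

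First I would recall that symmetry of $S$ means precisely that $S\subset S^*$ as linear relations, since $\sip{Sx}{y}=\sip{x}{Sy}$ for all $x,y\in\dom S$ is exactly the statement that every pair $(x,Sx)$ lies in $S^*$. From $S\subset S^*$ it is immediate that $S\cap S^*=S$, hence $\ran(S\cap S^*)=\ran S$.

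Next, the surjectivity hypothesis gives $\ran S=\hil$, so $\ran(S\cap S^*)=\hil$. At this point the hypothesis of the preceding corollary is satisfied, and applying it yields that $S$ is densely defined and self-adjoint, which is exactly what we want.

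There is no real obstacle here; the content of Stone's theorem has been absorbed into Proposition \ref{P:genStone} (via the intermediate corollary), and the present corollary is essentially a one-line observation that the symmetry assumption is a convenient sufficient condition ensuring $S\cap S^*=S$. The only thing worth double-checking is that we do not implicitly use density of $\dom S$ anywhere in the reduction, which we do not: the identification $S\subset S^*$ via symmetry and the equality $S\cap S^*=S$ are both valid for an arbitrary (not necessarily densely defined) symmetric operator, and density is then a \emph{conclusion} supplied by the preceding corollary rather than an assumption.
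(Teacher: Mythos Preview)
Your proposal is correct and follows exactly the paper's approach: the paper observes just before this corollary that symmetry gives $S\cap S^*=S$, so surjectivity yields $\ran(S\cap S^*)=\hil$, and then the preceding corollary applies. Your additional remark that density of $\dom S$ is not needed in the reduction (and is instead a conclusion) is a helpful clarification but does not deviate from the paper's logic.
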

In the next result we give a necessary and sufficient condition for an operator $S$ to be identical with the adjoint of a given operator $T$. 
\begin{theorem}
Let $\hil,\kil$ be real or complex Hilbert spaces and let $S:\hil\to\kil$ and $T:\kil\to\hil$ be (not necessarily densely defined or closed) linear operators. The following two statements are equivalent:
\begin{enumerate}[label=\textup{(\roman*)}]
    \item $T$ is densely defined and $S=T^*$,
    \item \begin{enumerate}[label=\textup{(\alph*)}]
        \item $(\ran T)^{\perp}=\ker S$,
        \item $\ran S+\ran T^*\subset \ran (S\cap T^*)$.
    \end{enumerate}
\end{enumerate}
\end{theorem}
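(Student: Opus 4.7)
The plan is to reduce the theorem to a straightforward application of Corollary \ref{C:Arens1} applied to the linear relations $S$ and $T^{*}$. The direction (i)$\Rightarrow$(ii) is essentially bookkeeping: if $T$ is densely defined and $S=T^{*}$, then (a) is just the dual identity $\ker T^{*}=(\ran T)^{\perp}$; and since $S\cap T^{*}=T^{*}\cap T^{*}=T^{*}=S$, the inclusion in (b) reduces to $\ran S+\ran T^{*}=\ran T^{*}\subseteq\ran T^{*}$, which is trivial.

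For the substantive direction (ii)$\Rightarrow$(i), I would view $T^{*}$ as a (possibly multivalued) linear relation from $\kil$ to $\hil$, in the same ambient product space as the operator $S$, and check the two hypotheses of Corollary \ref{C:Arens1} for the pair $S,T^{*}$:
\begin{enumerate}[label=\textup{(\alph*)}]
\item $\ker S=\ker T^{*}$, which follows from (a) together with the standard identity $\ker T^{*}=(\ran T)^{\perp}$ recalled in the preliminaries;
\item $\ran S+\ran T^{*}\subseteq\ran(S\cap T^{*})$, which is exactly condition (b).
\end{enumerate}
Corollary \ref{C:Arens1} then yields $S=T^{*}$ as linear relations.

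Finally I would extract that $T$ is densely defined from single-valuedness of $S$: since $S$ is assumed to be a linear operator we have $\mul S=\{0\}$, and now $S=T^{*}$ gives $\mul T^{*}=\{0\}$. Using the second dual identity $\mul T^{*}=(\dom T)^{\perp}$ from the preliminaries, this forces $(\dom T)^{\perp}=\{0\}$, i.e.\ $T$ is densely defined. Hence $T^{*}$ is a genuine (single-valued) operator and $S=T^{*}$ in the usual operator sense.

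There is no real obstacle: the whole content lies in recognising that the kernel–range criterion of Corollary \ref{C:Arens1} is precisely what hypotheses (a)–(b) encode once $(\ran T)^{\perp}$ is rewritten as $\ker T^{*}$, and that the density of $\dom T$ is an automatic byproduct of $S$ being single-valued via $\mul T^{*}=(\dom T)^{\perp}$. The mild subtlety worth stating carefully is that Corollary \ref{C:Arens1} is applied to the relation $T^{*}$ (not to $T$), so one has to be mindful of which adjoint identities are used where.
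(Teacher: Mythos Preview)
Your proof is correct and is in fact more economical than the paper's own argument. The paper does not invoke Corollary~\ref{C:Arens1}; instead it gives a self-contained element-chasing proof: it first shows directly that $T$ is densely defined (by taking $g\in(\dom T)^{\perp}=\mul T^{*}\subseteq\ran T^{*}$ and using (b) to produce $h\in\dom S_0$ with $S_0h=g$, then (a) to force $h\in\ker S$, hence $g=0$), and only afterwards proves the two inclusions $T^{*}\subset S$ and $S\subset T^{*}$ separately, each time by the same ``subtract and land in the kernel'' trick. In effect the paper re-derives the content of Corollary~\ref{C:Arens1} in this special situation. Your route---apply Corollary~\ref{C:Arens1} to the pair $S,T^{*}$ (both viewed as relations in $\hil\times\kil$), obtain $S=T^{*}$ as relations, and then read off density of $\dom T$ from $\mul T^{*}=\mul S=\{0\}$---is shorter and makes the logical dependence on the Arens-type criterion explicit. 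The only point to keep clean, which you do flag, is that Corollary~\ref{C:Arens1} is being applied to relations (so that $T^{*}$ need not a priori be single-valued); the paper avoids this by establishing density first, at the cost of a longer argument.
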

\begin{proof}
It is obvious that (i) implies (ii). Assume now (ii) and for sake of brevity introduce the operator
\begin{equation*}
    S^{}_0\coloneqq S\cap T^*.
\end{equation*}
We start by establishing that $T$ is densely defined. Let $g\in(\dom T^*)^{\perp}$, then $(0,g)\in T^*$, i.e., $g\in \ran T^*$. By (ii) (a),
\begin{equation*}
    T^*g=S_0h=Sh
\end{equation*}
for some $h\in\dom S_0$. Then it follows  that $(h,Sh)\in T^*$ and therefore 
\begin{equation*}
    \sip{Tk}{h}=\sip{k}{Sh}=\sip{k}{g}=0,\qquad k\in\dom T,
\end{equation*}
whence we infer that $h\in(\ran T)^{\perp}$. Again by (ii) (a) we have $h\in\ker S$ and thus $g=Sh=0$. This proves that $T$ is densely defined and as a consequence, $T^*$ is an operator. Next we  prove that 
\begin{equation}\label{E:3.2-TsubS}
    T^*\subset S.
\end{equation}
To see this consider $g\in\dom T^*$. By (ii) (b),
\begin{equation*}
    T^*g=S_0h=Sh=T^*h
\end{equation*}
for some $h\in\dom S_0$. Then it follows that $T^*(g-h)=0$, i.e., 
\begin{equation*}
    g-h\in\ker T^*=(\ran T)^\perp=\ker S.
\end{equation*}
Consequently, $g=(g-h)+h\in\dom S$ and $Sg=Sh=T^*g$, which proves \eqref{E:3.2-TsubS}. It only remains to show that the converse inclusion
\begin{equation}\label{E:3.2-SsubT}
    S\subset T^*
\end{equation}
holds also true. For let $g\in \dom S$ and choose $h\in\dom S_0$ such that 
\begin{equation*}
    Sg=S_0h=T^*h=Sh.
\end{equation*}
Then $g-h\in\ker S=(\ran T)^\perp=\ker T^*$ whence we get $g=(g-h)+h\in\dom T^*$ and $T^*g=T^*h=Sg$, which proves \eqref{E:3.2-SsubT}.
\end{proof}

A celebrated theorem by J. von Neumann \cite{vonNeumann} states that $S^*S$ and $SS^*$ are positive and selfadjoint operators provided that $S$ is a densely defined and closed operator between $\hil$ and $\kil.$ In that case, $I+S^*S$ and $I+SS^*$ are both surjective. In \cite{SZ-TZS:reversed} it has been proved that the converse is also true: If $I+S^*S$ and $I+SS^*$ are both surjective operators then $S$ is necessarily closed (cf. also \cite{Gesztesy-Schmudgen}).  Below, as the main result of the paper, we establish an improvement of Neumann's theorem:
\begin{theorem}\label{T:RanI+ST}
Let $\hil,\kil$ be real or complex Hilbert spaces and let $S:\hil\to\kil$ and $T:\kil\to\hil$ be linear operators and introduce the operators $S_0\coloneqq S\cap T^*$ and $T_0\coloneqq T\cap S^*$. The following statements are equivalent:
\begin{enumerate}[label=\textup{(\roman*)}]
    \item $S,T$ are both densely defined and they are adjoint of each other: $S^*=T$ and $T^*=S$,
    \item $\ran (I+T_0S_0)=\hil$ and $\ran (I+S_0T_0)=\kil$. 
\end{enumerate}
\end{theorem}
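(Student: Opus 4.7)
The forward direction $(i) \Rightarrow (ii)$ is a direct consequence of von Neumann's classical theorem: under (i) one has $S_0 = S \cap T^* = S$ and $T_0 = T \cap S^* = T$, so $T_0 S_0 = S^*S$ and $S_0 T_0 = SS^*$, and von Neumann precisely asserts the surjectivity of $I + S^*S$ and $I + SS^*$. The substance lies in $(ii) \Rightarrow (i)$, for which I would proceed in three steps.

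\emph{Step 1 (density).} The definitions of $S_0$ and $T_0$ immediately yield the formal adjointness $\sip{S_0 x}{y} = \sip{x}{T_0 y}$ for $x \in \dom S_0$, $y \in \dom T_0$, and hence $\sip{(I + T_0 S_0)x}{x} = \|x\|^2 + \|S_0 x\|^2$ on $\dom(T_0 S_0)$. If $z \in (\dom S_0)^\perp$, then (ii) produces some $x \in \dom(T_0 S_0)$ with $x + T_0 S_0 x = z$; pairing with $x$ forces $\|x\|^2 + \|S_0 x\|^2 = 0$, so $x = 0$ and $z = 0$. Density of $\dom T_0$ follows symmetrically, so $S$ and $T$ are densely defined and $S^*, T^*$ are single-valued operators.

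\emph{Step 2 (skew-adjointness of the matrix).} I would form the densely defined, formally skew-symmetric block operator
\[
    M_0 = \begin{bmatrix} 0 & -T_0 \\ S_0 & 0 \end{bmatrix} : \dom S_0 \times \dom T_0 \to \hil \oplus \kil.
\]
Then $I - M_0^2 = \operatorname{diag}(I + T_0 S_0,\, I + S_0 T_0)$, which is surjective on $\hil \oplus \kil$ by (ii), and the factorizations $I - M_0^2 = (I \mp M_0)(I \pm M_0)$ force both $I - M_0$ and $I + M_0$ to be surjective. For any $(u, v) \in M_0^*$, surjectivity of $I + M_0$ provides $u' \in \dom M_0$ with $u' + M_0 u' = u - v$; using $M_0 \subset -M_0^*$ on $u'$, a short computation gives $M_0^*(u - u') = u - u'$. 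This vector is then orthogonal to $\ran(M_0 - I) = \hil \oplus \kil$ and so vanishes, giving $u = u' \in \dom M_0$ with $M_0 u = -v$. Hence $M_0^* \subset -M_0$, and combined with formal skew-symmetry, $M_0^* = -M_0$. Reading off the block entries yields $S_0^* = T_0$ and $T_0^* = S_0$; in particular both are closed.

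\emph{Step 3 (identifying $S$, $T$ with $S_0$, $T_0$).} Since $T_0 \subset T$, adjoints reverse to $T^* \subset T_0^* = S_0$, while $S_0 \subset T^*$ holds by definition, so $T^* = S_0$. A second adjoint gives $T^{**} = S_0^* = T_0$, so $T$ is closable with $\overline{T} = T_0$; combined with $T_0 \subset T$ this gives $T = T_0$. Symmetrically $S = S_0$, and therefore $S^* = S_0^* = T_0 = T$ and $T^* = T_0^* = S_0 = S$, which is (i). The principal obstacle is the skew-symmetric Stone-type argument in Step 2: everything else is a formal consequence of the identification $S_0^* = T_0$, so the crux is using the surjectivity of $I \pm M_0$ to promote $M_0$ from formally skew-symmetric to skew-adjoint.
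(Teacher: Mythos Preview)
Your proof is correct. Steps~1 and~3 coincide with the paper's argument almost verbatim: the density of $\dom S_0$ and $\dom T_0$ is obtained exactly as you do, and once $S_0^*=T_0$, $T_0^*=S_0$ are known, the passage to $S=T^*$, $T=S^*$ via the chain $T^*\subset T_0^*=S_0\subset T^*$ and $T^{**}=T_0$ is the same in both.

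The genuine difference is Step~2. The paper establishes $T_0^*=S_0$ by a direct element chase: given $g\in\dom T_0^*$, it writes $g=u+T_0S_0u$ and $T_0^*g=v+S_0T_0v$ using the two surjectivity hypotheses, then uses the injectivity of $I+T_0^*T_0$ to force $v=S_0u$, whence $g=u+T_0v\in\dom S_0$. Your route instead packages the pair $(S_0,T_0)$ into the skew-symmetric block operator $M_0$, observes that $I-M_0^2=\operatorname{diag}(I+T_0S_0,\,I+S_0T_0)$ is surjective, factors to get $I\pm M_0$ surjective, and then runs the standard ``skew-symmetric with surjective shifts $\Rightarrow$ skew-adjoint'' argument. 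This is a clean reformulation that actually anticipates the paper's Section~4, where the same matrix $M_{S,T}$ is introduced and Theorem~4.1 is proved by reducing to the present theorem; in effect you have reversed that logical dependence. The paper's computation is slightly more self-contained (no block-adjoint bookkeeping), while your version makes the underlying Stone-type mechanism more transparent and explains structurally why the two range conditions in (ii) are exactly what is needed.
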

\begin{proof}
It is clear that (i) implies (ii). To prove the converse implication observe first that
\begin{equation*}
    \sip{S^{}_0u}{v}=\sip{u}{T^{}_0v},\qquad u\in\dom S^{}_0, v\in \dom T^{}_0.
\end{equation*}
We start by showing that $S^{}_0$ is densely defined. Take a vector $g\in(\dom S_0)^{\perp}$, then there is $u\in\dom S^{}_0$ such that $g=u+T^{}_0S^{}_0u$. Consequently, 
\begin{equation*}
    0=\sip{u}{g}=\sip{u}{u}+\sip{T^{}_0S^{}_0u}{u}=\|u\|^2+\|S^{}_0u\|,
\end{equation*}
whence $u=0$, and therefore also $g=0$. It is proved analogously that  $T^{}_0$ is densely defined too, and therefore the adjoint relations $S_0^{*}$ and $T_0^{*}$ are operators such that $S^{}_0\subset T_0^*$ and $T^{}_0\subset S_0^*$.

We are going to prove now that $S^{}_0$ and $T^{}_0$ are adjoint of each other, i.e, 
\begin{equation}\label{E:T.3.3-adjofeachother}
    S_0^*=T^{}_0,\qquad T_0^*=S_0^{}.
\end{equation}
Consider a vector $g\in\dom T_0^*$  and take $u\in \dom S^{}_0$ and $v\in \dom T^{}_0$ such that 
\begin{equation*}
    g=u+T_0S_0u\qquad\mbox{and}\qquad T^{*}_0g=v+S^{}_0T^{}_0v.
\end{equation*}
Since $u$ is in $\dom T_0^*$ we infer that $T_0S_0u\in\dom T_0^*$ and hence 
\begin{equation*}
    T_0^*g=T_0^*u+T_0^*T^{}_0S^{}_0u.
\end{equation*}
It follows then that 
\begin{align*}
    0&=v-T_0^*u+S^{}_0T_0^{}v-T^{*}_0T^{}_0S^{}_0u=(I+T^{*}_0T^{}_0)(v-S^{}_0u)
\end{align*}
which yields $v=S_0u\in\dom T_0.$ As a consequence we obtain that
\begin{equation*}
    g=u+T^{}_0S^{}_0u=v+T^{}_0v,
\end{equation*}
and therefore that $g\in\dom S_0$. This proves the first equality of \eqref{E:T.3.3-adjofeachother}. The second one is proved in a similar way. 

Now we  can complete the proof easily: since $S_0\subset T^*$ and $T_0\subseteq T$ it follows that 
\begin{equation*}
    T_0^*=S^{}_0\subset T^*\subset T_0^*, 
\end{equation*}
whence $T^*=T_0^*=S^{}_0$, and therefore $T^*\subset S$.   On the other hand, $T_0\subset S^*$ implies
\begin{equation*}
    S\subset S^{**}\subset T^{*}_0=T^*,
\end{equation*}
whence we conclude that  $S=T^*$. It can be proved in a similar way that  $T=S^*$.
\end{proof}
As an immediate consequence we conclude the following result:
\begin{corollary}
Let $\hil$ and $\kil$ be real or complex Hilbert spaces and let $S:\hil\to\kil$ be a densely defined operator. The following statements are equivalent:
\begin{enumerate}[label=\upshape{(\roman*)}]
    \item $S$ is closed,
    \item $S^*S$ and $SS^*$ are self-adjoint operators,
    \item $\ran (I+S^*S)=\hil$ and $\ran (I+SS^*)=\kil$.
\end{enumerate}
\end{corollary}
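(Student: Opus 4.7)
The plan is to apply Theorem \ref{T:RanI+ST} directly to the pair $(S,T) \coloneqq (S, S^*)$, and complete the cycle with the classical von Neumann theorem plus a standard spectral observation. Since $S$ is densely defined, $S^*$ is a (single-valued) operator, so the setup of Theorem \ref{T:RanI+ST} is legitimate.

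First I would compute the auxiliary operators $S_0$ and $T_0$ appearing in Theorem \ref{T:RanI+ST} for this specific choice. With $T = S^*$, the relation $T^* = S^{**} = \overline{S}$ is just the closure of $S$, and since $S \subseteq \overline{S}$ we get
\begin{equation*}
    S_0 = S \cap T^* = S \cap \overline{S} = S, \qquad T_0 = T \cap S^* = S^* \cap S^* = S^*.
\end{equation*}
Consequently condition (ii) of Theorem \ref{T:RanI+ST} becomes precisely $\ran(I+S^*S) = \hil$ and $\ran(I+SS^*) = \kil$, i.e. the condition (iii) of the corollary. On the other hand, condition (i) of the theorem reads: $S$ and $S^*$ are densely defined and $(S^*)^* = S$, $S^* = S^*$. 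The only nontrivial piece is $S^{**} = S$, which, together with $\overline{S} = S^{**}$, is just the statement that $S$ is closed (and when $S$ is closed, $S^*$ is automatically densely defined since $S = S^{**}$ is then an operator). This establishes (i)$\Leftrightarrow$(iii).

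Next, (i)$\Rightarrow$(ii) is exactly von Neumann's classical theorem quoted just before Theorem \ref{T:RanI+ST}: a densely defined closed operator $S$ has positive self-adjoint $S^*S$ and $SS^*$. For (ii)$\Rightarrow$(iii), I would note that $S^*S$ is automatically positive on its domain since $\sip{S^*Sx}{x} = \|Sx\|^2 \ge 0$ for every $x \in \dom(S^*S)$; thus if $S^*S$ is self-adjoint, then $I+S^*S$ is self-adjoint with $I+S^*S \ge I$. This forces $\ker(I+S^*S) = \{0\}$ and $\ran(I+S^*S)$ closed (bounded below), so by self-adjointness
\begin{equation*}
    \ran(I+S^*S) = \ker(I+S^*S)^\perp = \hil,
\end{equation*}
and the analogous argument handles $I+SS^*$. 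This closes the cycle (i)$\Rightarrow$(ii)$\Rightarrow$(iii)$\Rightarrow$(i).

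There is no real obstacle; the only point requiring a moment's care is verifying that the intersections $S \cap \overline{S}$ and $S^* \cap S^*$ collapse to $S$ and $S^*$ respectively, so that Theorem \ref{T:RanI+ST} delivers the non-trivial equivalence (i)$\Leftrightarrow$(iii) without assuming $S$ closed in advance. Everything else is a short spectral argument or a citation of the classical theorem.
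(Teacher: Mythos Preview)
Your proposal is correct and follows essentially the same route as the paper: the paper's proof is the one-liner ``Apply Theorem \ref{T:RanI+ST} with $T\coloneqq S^*$,'' and your computation $S_0=S\cap S^{**}=S$, $T_0=S^*\cap S^*=S^*$ is exactly what makes that application go through, yielding (i)$\Leftrightarrow$(iii). Your added detail on (i)$\Rightarrow$(ii)$\Rightarrow$(iii) via von Neumann's theorem and the positivity/spectral argument is left implicit in the paper but is the intended completion.
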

\begin{proof}
Apply Theorem \ref{T:RanI+ST} with $T\coloneqq S^*$.
\end{proof}
In the ensuing theorem we provide a renge-kernel characterization of operators $T$ that are identical with the adjoint $S^*$ of a densely defined symmetric operator $S$. We stress that  no condition on the closedness of the operator or density of the domain is imposed. On the contrary: we get those properties from the other conditions. 

\begin{theorem}
Let $\hil$ be a real or complex Hilbert space and let $T:\hil\to\hil$ be a (not necessarily densely defined or closed) linear operator and let $T_0\coloneqq T\cap T^*$. The following two statements are equivalent:
\begin{enumerate}[label=\textup{(\roman*)}]
    \item there exists a densely defined symmetric operator $S$ such that $S^*=T$,
    \item \begin{enumerate}[label=\textup{(\alph*)}]
        \item $\ker T=(\ran T^*)^{\perp}$,
        \item $\ran T_0=\ran T^{**}=\ran T^*$.
    \end{enumerate}
\end{enumerate}
In particular, if any of the equivalent conditions (i), (ii) is satisfied then $T$ is a densely defined and closed operator such that  $T^*\subset T$. 
\end{theorem}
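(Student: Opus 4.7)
The whole argument is anchored by the observation that (i) is equivalent to asking that $T$ be closed, densely defined, and satisfy $T^{*}\subset T$; in that case $T_{0}=T\cap T^{*}$ collapses to $T^{*}$, which makes both parts of (ii) transparent. Conversely, I aim to show that (a) and (b) force precisely these three properties, at which point $S\coloneqq T_{0}$ will serve as the required symmetric operator.

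For (i)$\Rightarrow$(ii), suppose $S$ is densely defined, symmetric, with $S^{*}=T$. As any adjoint, $T$ is closed; and since $T^{*}=S^{**}=\overline{S}$ is a (densely defined) operator, $T$ is itself densely defined. Symmetry gives $S\subset S^{*}=T$, and passing to closures (using closedness of $T$) yields $T^{*}=\overline{S}\subset T$, whence $T_{0}=T^{*}$ and $T^{**}=T$. Condition (a) is then the familiar duality $\ker T=\ker T^{**}=(\ran T^{*})^{\perp}$, while (b) reduces, after substituting $T_{0}=T^{*}$ and $T^{**}=T$, to an identity among ranges that have all collapsed onto $\ran T^{*}$.

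For (ii)$\Rightarrow$(i), I set $S\coloneqq T_{0}=T\cap T^{*}$ and plan to invoke the range-kernel characterization of ``$S=T^{*}$'' established earlier in this section, applied with the pair $(T_{0},T)$ in place of $(S,T)$. The two hypotheses to verify are
\begin{equation*}
(\ran T)^{\perp}=\ker T_{0} \qquad\text{and}\qquad \ran T_{0}+\ran T^{*}\subset\ran(T_{0}\cap T^{*}).
\end{equation*}
The second is immediate: $T_{0}\subset T^{*}$ forces $T_{0}\cap T^{*}=T_{0}$, while (ii)(b) gives $\ran T^{*}=\ran T_{0}$, so both sides reduce to $\ran T_{0}$. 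The first combines (a) and (b): from $T_{0}\subset T$ one has $\ran T^{*}=\ran T_{0}\subset\ran T$, whence $(\ran T)^{\perp}\subset(\ran T^{*})^{\perp}=\ker T$ by (a); coupled with the standard identity $\ker T^{*}=(\ran T)^{\perp}$ this gives $\ker T_{0}=\ker T\cap\ker T^{*}=\ker T^{*}=(\ran T)^{\perp}$. The cited theorem then yields that $T$ is densely defined and $T_{0}=T^{*}$, from which $T^{*}\subset T$ and $T^{**}=T_{0}^{*}=T$ (so $T$ is closed); finally $S\coloneqq T_{0}=T^{*}$ is densely defined (as the adjoint of a closed, densely defined operator), symmetric ($S=T^{*}\subset T=S^{*}$), and satisfies $S^{*}=T^{**}=T$.

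The main obstacle is the kernel identity $\ker T_{0}=(\ran T)^{\perp}$: neither (a) nor (b) alone suffices, but in tandem they pin it down via the duality $\ker T^{*}=(\ran T)^{\perp}$. Once this is secured, the rest flows routinely from the already-established range-kernel characterization of operators that are the adjoint of each other.
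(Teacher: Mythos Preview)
Your route to (ii)$\Rightarrow$(i) via the earlier range--kernel theorem (applied with $S\coloneqq T_0$ in place of $S$) is genuinely different from the paper's, which instead argues from scratch in four separate steps: it chases a vector in $(\dom T)^\perp=\mul T^*$ through condition~(b) to get $T$ densely defined; proves $T^*\subset T$ directly; chases a vector in $\mul T^{**}$ to get $T^*$ densely defined; and finally shows $T=T^{**}$ by another range argument. Your verification of the two hypotheses of that theorem is correct, and it legitimately delivers that $T$ is densely defined with $T_0=T^*$, hence $T^*\subset T$.

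The gap is the line ``$T^{**}=T_0^*=T$ (so $T$ is closed)''. From $T_0=T^*$ one certainly gets $T_0^*=T^{**}$, but the equality $T_0^*=T$ is precisely the closedness you still have to prove; nothing you have invoked so far forces it. Observe that in applying the earlier theorem you only consumed $\ran T_0=\ran T^*$, so the remaining piece of (ii)(b), namely $\ran T_0=\ran T^{**}$, has not yet been spent---and that is exactly what is needed. From $T_0\subset T\subset T^{**}$ together with $\ran T_0=\ran T^{**}$ one obtains $\ran T=\ran T^{**}$, while (ii)(a) gives $\ker T=(\ran T^*)^\perp=\ker T^{**}$; now Corollary~\ref{C:Arens2} applied to the inclusion $T\subset T^{**}$ yields $T=T^{**}$. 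Only after this does it follow that $T^*$ is densely defined (since $\mul T^{**}=(\dom T^*)^\perp=\{0\}$), and you may set $S\coloneqq T^*$.

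One further caution on (i)$\Rightarrow$(ii): after your substitutions $T_0=T^*$ and $T^{**}=T$, condition~(b) reads $\ran T^*=\ran T=\ran T^*$, so what must actually be checked is $\ran T=\ran T^*$; the phrase ``all collapsed onto $\ran T^*$'' hides this. The paper, for its part, declares this direction ``straightforward'' without elaboration.
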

\begin{proof}
It is straightforward that (i) implies (ii) so we only prove the converse. We start by proving that $T$ is densely defined. Take $g\in(\dom T)^{\perp}$, then $g\in\mul T^*\subseteq \ran T^*$. By (ii) (b), there exists $h\in\dom T_0$ such that $g=T_0h=Th$. Consequently, $(h,g)\in T^*$ and for every $f\in\dom T$,
\begin{equation*}
    \sip{Tf}{h}=\sip{f}{g}=0,
\end{equation*}
which yields  $h\in(\ran T)^{\perp}$. Observe that (ii) (a) and (b) together imply that 
\begin{equation}\label{E:ranTperp=kerT}
    (\ran T)^\perp=\ker T,
\end{equation}
whence we infer that $h\in \ker T$ and therefore that $g=Th=0$. This means that $T^*$ is a (single valued) operator. Or next claim is to show that 
\begin{equation}\label{E:T*subsetT}
    T^*\subset T.
\end{equation}
To this end, let $g\in \dom T^*$, then $T^*g=T_0h$ for some $h_0\in\dom T_0$. From inclusion $T_0\subset T^*$ we conclude that 
$g-h\in\ker T^*=(\ran T)^\perp$, thus $g=(g-h)+h\in \dom T$ and 
\begin{equation*}
    Tg=Th=T_0h=T^*g,
\end{equation*}
which proves \eqref{E:T*subsetT}. Next we show that $T^*$ is densely defined too, i.e., $T$ is closable. To this end condider a vector $g\in(\dom T^*)^\perp=\mul T^{**}$. Since $\mul T^{**}\subseteq \ran T^{**}$,  we can find  a vector $h\in\dom T_0$ such that $g=T_0h$. For every $k\in\dom T^*$,
\begin{equation*}
    \sip{h}{T^*k}=\sip{Th}{k}=\sip{g}{k}=0,
\end{equation*}
thus $h\in (\ran T^*)^{\perp}$. By (ii) (a) we infer that $h\in\ker T$ and hence $g=Th=0$, hence $(\dom T^*)^\perp=\{0\}$, as it is claimed. Finally we show that $T$ is closed. Take $g\in\dom T^{**}$, then $T^{**}g=Th$ for some $h\in\dom T$, according to assumption  (ii) (b). Hence $g-h\in\ker T^{**}=(\ran T^*)^{\perp}$, thus $g-h\in \ker T$ because of (ii) (a). Consequently, $g=(g-h)+h\in\dom T$ which proves identity $T=T^{**}$. Summing up, $S\coloneqq T^*$ is a densely defined operator such that $S\subset T=S^*$. In other words, $T$ is identical with the adjoint $S^*$ of the symmetric operator $S$.    
\end{proof}
\section{Characterizations involving resolvent norm estimations}

Let $\hil$ and $\kil$ be real or complex Hilbert spaces. For given two linear operators $S:\hil\to\kil$ and $T:\kil\to\hil$, let us consider the operator matrix
\begin{equation*}
    M_{S,T}\coloneqq \opmatrix{0}{-T}{S}{0}, 
\end{equation*}
acting on the product Hilbert space $\hil\times \kil$. More precisely, $ M_{S,T}$ is an operator acting on its domain $\dom  M_{S,T}(\lambda)\coloneqq\dom S\times \dom T$ by 
\begin{equation*}
    M_{S,T}(h,k)\coloneqq (-Tk,Sh) \qquad (h\in\dom S, k\in\dom T).
\end{equation*}
Assume that a real or complex number $\lambda\in\dupK$ belongs to the resolvent set $\rho(M_{S,T})$, which means that 
\begin{equation*}
    M_{S,T}-\lambda =\opmatrix{-\lambda}{-T}{S}{-\lambda}
\end{equation*}
has an everywhere defined bounded inverse. In that case, for brevity's sake, we  introduce the notation
\begin{equation*}
    R_{S,T}(\lambda)\coloneqq (M_{S,T}-\lambda )^{-1}
\end{equation*} 
for the corresponding resolvent operator. 

In the present section we are going to establish some criteria, by means of norms of the resolvent operator $R_{S,T}(\lambda)$, under which the operators $S$ and $T$ are adjoint of each other. Our approach is motivated by the classical paper of T. Nieminen \cite{Nieminen} (cf. also \cite{Sandovici}). We emphasize that our  framework is more general than that of \cite{Nieminen} for many ways: we do not assume that the operators under consideration are densely defined or closed, and also the underlying space may be real or complex.
\begin{theorem}\label{T:Nieminen}
Let $S:\hil\to\kil$ and $T:\kil\to\hil$ be linear operators between the real or complex Hilbert spaces $\hil$ and $\kil$. The following assertions are equivalent:
\begin{enumerate}[label=\textup{(\roman*)}, labelindent=\parindent]
    \item $S$ and $T$ are densely defined such that  $S^*=T$ and $T^*=S$,
    \item every non-zero real number $t$ belongs to the resolvent set of $M_{S,T}$ and  
    \begin{equation}\label{E:Nieminen}
        \|R_{S,T}(t) \|\leq \frac{1}{\abs t},\qquad \forall t\in\dupR, t\neq 0.
    \end{equation}
\end{enumerate}
\end{theorem}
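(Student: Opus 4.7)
The plan is to recognize that condition (i) is equivalent to $M_{S,T}$ being skew-adjoint on $\hil\times\kil$, while (ii) is the classical resolvent characterization of skew-adjointness; the proof then consists of verifying each half of this reformulation in the present generality.

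For (i)$\Rightarrow$(ii), I would first show $M_{S,T}^{*}=-M_{S,T}$ by a routine computation from the definition of the adjoint, using the density of $\dom S=\dom T^{*}$ and $\dom T=\dom S^{*}$. Setting $k=0$ and then $h=0$ in $\sip{M_{S,T}(h,k)}{(h',k')}=\sip{(h,k)}{(u,v)}$ yields $\dom M_{S,T}^{*}=\dom S\times\dom T$ and $M_{S,T}^{*}(h',k')=(Tk',-Sh')=-M_{S,T}(h',k')$. Because $\Re\sip{M_{S,T}x}{x}=0$ for every $x\in\dom M_{S,T}$, the identity
\begin{equation*}
\|(M_{S,T}-t)x\|^{2}=\|M_{S,T}x\|^{2}+t^{2}\|x\|^{2}\geq t^{2}\|x\|^{2}\qquad (t\in\dupR)
\end{equation*}
gives injectivity with closed range and the norm estimate $1/\abs{t}$ on the inverse. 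Density of $\ran(M_{S,T}-t)$ then follows from $(\ran(M_{S,T}-t))^{\perp}=\ker(M_{S,T}^{*}-t)=\ker(-M_{S,T}-t)$, to which the same inequality applies; hence $t\in\rho(M_{S,T})$ and \eqref{E:Nieminen} is established.

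The converse is the substantive direction. Expanding
\begin{equation*}
\|(M_{S,T}-t)(h,k)\|^{2}=\|Tk\|^{2}+\|Sh\|^{2}+2t\bigl(\Re\sip{Tk}{h}-\Re\sip{Sh}{k}\bigr)+t^{2}(\|h\|^{2}+\|k\|^{2})
\end{equation*}
and comparing with the lower bound $t^{2}(\|h\|^{2}+\|k\|^{2})$ forced by \eqref{E:Nieminen} yields
\begin{equation*}
2t\bigl(\Re\sip{Tk}{h}-\Re\sip{Sh}{k}\bigr)+\|Tk\|^{2}+\|Sh\|^{2}\geq 0\qquad \forall\,t\in\dupR\setminus\{0\}.
\end{equation*}
The left-hand side is affine in $t$ with non-negative constant term, so its linear coefficient must vanish, giving $\Re\sip{Sh}{k}=\Re\sip{h}{Tk}$; in the complex case the substitution $h\mapsto ih$ recovers the imaginary parts, so that $\sip{Sh}{k}=\sip{h}{Tk}$ for every $h\in\dom S$ and $k\in\dom T$. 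Consequently $T\subset S^{*}$ and $S\subset T^{*}$, and therefore $S_{0}\coloneqq S\cap T^{*}=S$, $T_{0}\coloneqq T\cap S^{*}=T$.

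To finish I would invoke Theorem \ref{T:RanI+ST}. Since $1\in\rho(M_{S,T})$, the operator $M_{S,T}-I$ is surjective, so for every $(u,v)\in\hil\times\kil$ there exist $h\in\dom S$, $k\in\dom T$ with $-h-Tk=u$ and $Sh-k=v$. Choosing $u=0$ forces $h=-Tk\in\dom S$ and $-(I+ST)k=v$, whence $\ran(I+ST)=\kil$; choosing $v=0$ gives $k=Sh\in\dom T$ and $-(I+TS)h=u$, whence $\ran(I+TS)=\hil$. Combined with $S_{0}=S$ and $T_{0}=T$, Theorem \ref{T:RanI+ST} now delivers (i). I expect the main obstacle to be the first step of the converse: extracting a pointwise adjoint identity from an inequality that is only assumed on the real line requires carefully exploiting the affine dependence on $t$ and, in the complex case, the polarization-type substitution $h\mapsto ih$.
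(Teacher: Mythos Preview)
Your proposal is correct and follows essentially the same strategy as the paper: in the direction (ii)$\Rightarrow$(i) you both expand $\|(M_{S,T}-t)(h,k)\|^{2}$, exploit the affine dependence on $t$ (with the substitution $h\mapsto ih$ in the complex case) to obtain $S\subset T^{*}$ and $T\subset S^{*}$, and then finish by producing the surjectivity hypotheses of Theorem~\ref{T:RanI+ST}.

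There are two minor tactical differences worth noting. For (i)$\Rightarrow$(ii) you argue via skew-adjointness of $M_{S,T}$ and deduce surjectivity of $M_{S,T}-t$ from $\ker(M_{S,T}^{*}-t)=\ker(-M_{S,T}-t)=\{0\}$ together with closedness of the range; the paper instead shows directly that $\ran(M_{S,T}+t)=\tfrac1t S\oplus W(\tfrac1t S^{*})=\hil\times\kil$ via the graph decomposition associated with a closed densely defined operator. Your route is conceptually cleaner, the paper's is more explicit. For the final step of (ii)$\Rightarrow$(i) you solve the system $(M_{S,T}-1)(h,k)=(u,v)$ with $u=0$ or $v=0$ to read off $\ran(I+ST)=\kil$ and $\ran(I+TS)=\hil$ from the single value $t=1$, whereas the paper uses the factorization
\begin{equation*}
(M_{S,T}+1)\bigl(-(M_{S,T}-1)\bigr)=\opmatrix{I+TS}{0}{0}{I+ST}
\end{equation*}
with both $t=\pm1$. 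Both reach the same conclusion and feed into Theorem~\ref{T:RanI+ST} in the same way.
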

\begin{proof}
Let us start by proving that (i) implies (ii). Assume therefore that $S$ is densely defined and closed and that $T=S^*$. Consider a non-zero real number $t$ and a pair of vectors $h\in\dom S$ and $k\in\dom S^*$, then we have
\begin{align*}
    \left\|\opmatrix{t}{-S^*}{S}{t}\pair hk\right\|^2&=\|th-S^*k\|^2+\|Sh+tk\|^2\\
                &=t^2[\|h\|^2+\|k\|^2]+\|Sh\|^2+\|S^*k\|^2\\
                &\geq  t^2\left\|\pair hk\right\|^2
\end{align*}
which implies that $M_{S,T}+t$ is bounded from below and the norm of its inverse $R_{S,T}(-t)$ satisfies \eqref{E:Nieminen}. However it is not yet clear that $R_{S,T}(-t)$ is everywhere defined. But observe that 
\begin{equation*}
    \opmatrix{t}{-S^*}{S}{t}\pair hk=t\bigg(\pair{h}{\tfrac1tSh} +\pair{-\tfrac1tS^*k}{k}\bigg)
\end{equation*}
whence we get 
\begin{equation}\label{E:ranMST}
    \ran(M_{S,T}+t)=\tfrac1tS\oplus W (\tfrac1tS^*),
\end{equation}
where $W$ is the `flip' operator $W(k,h)\coloneqq (-h,k)$.
Since $S$ is densely defined and closed according to our hypotheses, the  subspace on the right hand side of \eqref{E:ranMST} is equal to $\hil\times \kil$. This proves statement (ii).
 
For the converse direction, observe that \eqref{E:Nieminen} implies 
\begin{equation*}
    \left\|\opmatrix{t}{-T}{S}{t}\pair hk\right\|^2\geq t^2\left\|\pair hk\right\|^2,\qquad h\in\dom S, k\in\dom T.
\end{equation*}
Hence from \eqref{E:Nieminen} we conclude that 
\begin{align*}
    0&\geq\|Sx\|^2+\|Ty\|^2+t\{\sip{Sx}{y}-\sip{x}{Ty}+\sip{y}{Sx}-\sip{Ty}{x}\}\\
     & =\|Sx\|^2+\|Ty\|^2+2t  \operatorname{Re}\{\sip{Sx}{y}-\sip{x}{Ty}\}
\end{align*}
for every $t\in\dupR$. Consequently, 
\begin{equation*}
    \operatorname{Re} \sip{Sx}{y}=\operatorname{Re} \sip{x}{Ty},\qquad x\in\dom S, y\in\dom T.
\end{equation*}
In the real Hilbert space case it is straightforward that  $S$ and $T$ are adjoint to each other. In the complex case, replace $x$ by $ix$ to get  
\begin{equation*}
    \operatorname{Im} \sip{Sx}{y}=\operatorname{Im} \sip{x}{Ty},\qquad x\in\dom S, y\in\dom T.
\end{equation*}
So, in both real and complex cases, we obtained that  $S\subset T^* $ and $T\subset S^*$. With  notation of Theorem \ref{T:RanI+ST} this means that  $S_0=S$ and $T_0=T$. Since we have \begin{equation*}
    \opmatrix{I}{-T}{S}{I}= M^{}_{S,T}+1 ,\qquad \opmatrix{I}{T}{-S}{I}=- [M^{}_{S,T}-1],
\end{equation*}
we conclude that 
\begin{equation*}
    \opmatrix{I+TS}{0}{0}{I+ST}= -M^{}_{S,T}(1) M^{}_{S,T}(-1)
\end{equation*}
is a surjective operator onto $\hil\times \kil$, which entails  $\ran (I+TS)=\hil$ and $\ran (I+ST)= \kil$. An immediate application of Theorem \ref{T:RanI+ST} completes the proof. 
\end{proof}
As an immediate consequence of Theorem \ref{T:Nieminen} we can establish the following characterizations of self-adjoint, skew-adjoint and unitary operators. 
\begin{corollary}
Let $\hil$ be a real or complex Hilbert space. For a linear operator $S:\hil\to\hil$  the following assertions are equivalent:
\begin{enumerate}[label=\textup{(\roman*)}, labelindent=\parindent]
    \item $S$ is  densely defined and self-adjoint,
    \item Every non-zero real number $t$ is in the resolvent set of $M_{S,-S}$ and  
    \begin{equation}
        \|R_{S,-S}(t) \|\leq \frac{1}{\abs t},\qquad \forall t\in\dupR, t\neq 0.
    \end{equation}
\end{enumerate}
\end{corollary}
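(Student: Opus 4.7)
The plan is to deduce this corollary as a direct specialisation of Theorem~\ref{T:Nieminen} by taking the second operator $T$ there to be $S$ itself. With this identification the mutual adjointness condition $S^{*}=T$ and $T^{*}=S$ collapses to the single identity $S^{*}=S$, i.e., self-adjointness of $S$, and the operator matrix $M_{S,T}$ of Theorem~\ref{T:Nieminen} becomes (up to a matter of sign convention in how the matrix is written) the matrix that appears in the statement of the corollary.

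For the forward direction (i)$\Rightarrow$(ii), suppose $S$ is densely defined and self-adjoint, so that $S^{*}=S$. Then the pair $(S,S)$ trivially satisfies condition (i) of Theorem~\ref{T:Nieminen}: both operators are densely defined and each is the adjoint of the other. Part (ii) of that theorem therefore delivers immediately the inclusion of every nonzero real $t$ in the resolvent set together with the bound $\|R(t)\|\leq 1/|t|$.

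For the converse (ii)$\Rightarrow$(i), the resolvent-norm estimate assumed in the corollary is exactly the hypothesis of Theorem~\ref{T:Nieminen}(ii) for the pair $(S,S)$. Hence part (i) of Theorem~\ref{T:Nieminen} applies and yields that $S$ is densely defined with $S^{*}=S$; that is, $S$ is self-adjoint.

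I do not anticipate any substantive obstacle here: the corollary is a pure specialisation of Theorem~\ref{T:Nieminen} to the case in which one operator is taken to be the other. The only point requiring care is the routine notational verification that, once $T$ is identified with $S$, the matrix $M_{S,T}$ of Theorem~\ref{T:Nieminen} coincides with the matrix displayed in the statement of the corollary, and that the estimate $\|R(t)\|\leq 1/|t|$ then translates verbatim between the two formulations.
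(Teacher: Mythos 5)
Your proposal is correct and is essentially identical to the paper's own proof, which consists of the single line ``Apply Theorem \ref{T:Nieminen} with $T\coloneqq S$,'' so that the mutual adjointness $S^*=T$ and $T^*=S$ collapses to $S^*=S$ exactly as you argue. The ``sign convention'' you flag in passing is in fact a typo in the paper rather than a gap on your side: with the paper's definition $M_{S,T}\coloneqq\kismatrix{0}{-T}{S}{0}$, the choice $T\coloneqq S$ yields $\kismatrix{0}{-S}{S}{0}$, which is what the corollary's symbol $M_{S,-S}$ must be read as denoting (this subscript and the $M_{S,S}$ of the subsequent skew-adjoint corollary are interchanged relative to literal substitution into $M_{S,T}$; read literally, $M_{S,-S}=\kismatrix{0}{S}{S}{0}$ would make condition (ii) fail already for the self-adjoint operator $S=I$ at $t=1$), so the identification you call a ``routine notational verification'' is indeed the intended one.
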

\begin{proof}
Apply Theorem \ref{T:Nieminen} with $T\coloneqq S$ to conclude the the desired equivalence.
\end{proof}
\begin{corollary}
Let  $\hil$ be a real or complex Hilbert space. For a linear operator $S:\hil\to\hil$  the following assertions are equivalent:
\begin{enumerate}[label=\textup{(\roman*)}, labelindent=\parindent]
    \item $S$ is  densely defined and skew-adjoint,
    \item every non-zero real number $t$ is in the resolvent set of $M_{S,S}$ and  
    \begin{equation} 
        \|R_{S,S}(t) \|\leq \frac{1}{\abs t},\qquad \forall t\in\dupR, t\neq 0.
    \end{equation}
\end{enumerate}
\end{corollary}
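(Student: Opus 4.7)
The plan is to obtain the corollary as an immediate specialization of Theorem~\ref{T:Nieminen}, mirroring the proof of the preceding (self-adjoint) corollary. Concretely, I would invoke Theorem~\ref{T:Nieminen} with the particular choice $T\coloneqq-S$.

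Under this substitution the double adjointness requirement ``$S^{*}=T$ and $T^{*}=S$'' from part~(i) of the main theorem collapses into the single identity $S^{*}=-S$, which is precisely the defining property of a densely defined skew-adjoint operator; the companion equation $T^{*}=S$ reduces to $(-S)^{*}=S$, encoding the very same requirement. Hence condition~(i) of the corollary is nothing but condition~(i) of Theorem~\ref{T:Nieminen} specialized to this single-operator situation, and density of $\dom S$ is delivered by the theorem's conclusion rather than needing to be assumed separately.

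On the resolvent side, the block operator $M_{S,T}$ appearing in Theorem~\ref{T:Nieminen}(ii) specializes, upon setting $T=-S$, to the matrix $M_{S,S}$ appearing in condition (ii) of the corollary, and the resolvent bound $\|R_{S,T}(t)\|\le 1/|t|$ valid for every nonzero real $t$ transfers verbatim to the resolvent $R_{S,S}(t)$. The equivalence asserted by the corollary is therefore just the main theorem's equivalence, re-labelled.

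The argument is thus pure bookkeeping: there is no analytic obstacle, since all the work has already been carried out in the proof of Theorem~\ref{T:Nieminen}. The only thing to recognize is that the choice $T\coloneqq-S$ turns the general two-variable adjointness criterion of the main theorem into the one-variable skew-adjointness condition on $S$, in exact analogy with the substitution $T\coloneqq S$ used to derive the self-adjoint corollary above.
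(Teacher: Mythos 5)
Your proposal follows the paper's own proof exactly: the paper, too, disposes of this corollary in one line by applying Theorem~\ref{T:Nieminen} with $T\coloneqq -S$, and your observation that condition (i) of the theorem then collapses to skew-adjointness $S^*=-S$ is the whole content of that argument. However, your ``pure bookkeeping'' step contains a sign slip that deserves to be flagged rather than glossed: with the paper's definition $M_{S,T}=\opmatrix{0}{-T}{S}{0}$, the substitution $T=-S$ yields $M_{S,-S}=\opmatrix{0}{S}{S}{0}$, which is \emph{not} the matrix $M_{S,S}=\opmatrix{0}{-S}{S}{0}$ appearing in the corollary as printed. The discrepancy is not cosmetic: applying Theorem~\ref{T:Nieminen} to the pair $(S,S)$ shows that the printed resolvent condition on $M_{S,S}$ is equivalent to $S^*=S$, i.e., it characterizes \emph{self-adjointness}, and no symmetry rescues the identification --- conjugating by $\mathrm{diag}(\pm I,\mp I)$ flips both off-diagonal signs simultaneously (so $M_{S,S}$ is unitarily equivalent to $M_{-S,-S}$, never to $M_{S,-S}$), and the substitution $t\mapsto -t$ only exchanges $M$ with $-M$, which again changes both signs.

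What this reveals is that the matrices in this corollary and in the preceding self-adjoint one have evidently been swapped by a typo in the paper itself: the self-adjoint corollary states $M_{S,-S}$ but is proved with $T\coloneqq S$ (i.e., with $M_{S,S}$), while the skew-adjoint corollary states $M_{S,S}$ but is proved with $T\coloneqq -S$ (i.e., with $M_{S,-S}$). So your substitution is the correct one for skew-adjointness and proves the \emph{intended} statement, in exact agreement with the paper's one-line proof; but your claim that the bound ``transfers verbatim'' to $R_{S,S}$ is false under the stated conventions. A careful writeup should either restate the corollary with $M_{S,-S}$ and $R_{S,-S}$, or explicitly note and correct the sign convention, rather than assert the identification as automatic.
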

\begin{proof}
Apply Theorem \ref{T:Nieminen} with $T\coloneqq -S$.
\end{proof}
\begin{corollary}
Let $\hil$ and $\kil$ be a real or complex Hilbert spaces. For a linear operator $U:\hil\to\kil$  the following assertions are equivalent:
\begin{enumerate}[label=\textup{(\roman*)}, labelindent=\parindent]
    \item $U$ is a unitary operator,
    \item $\ker U=\{0\}$, every non-zero real number $t$ is in the resolvent set of $M_{U,U^{-1}}$ and  
    \begin{equation} 
        \|R_{U,U^{-1}}(t) \|\leq \frac{1}{\abs t},\qquad \forall t\in\dupR, t\neq 0.
    \end{equation}
\end{enumerate}
\end{corollary}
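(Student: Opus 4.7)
The plan is to apply Theorem~\ref{T:Nieminen} with $S\coloneqq U$ and $T\coloneqq U^{-1}$; the hypothesis $\ker U=\{0\}$ is precisely what makes $U^{-1}$ a legitimate (possibly unbounded) linear operator $\kil\to\hil$ with $\dom U^{-1}=\ran U$, so that $T$ is well-defined in the first place.

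The direction (i)$\Rightarrow$(ii) I would dispatch in one sentence: a unitary $U$ is everywhere defined, bijective and bounded with $U^{*}=U^{-1}$, whence $U$ and $U^{-1}$ are mutually adjoint densely defined operators, and Theorem~\ref{T:Nieminen} supplies the resolvent bound together with $\ker U=\{0\}$.

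For the converse (ii)$\Rightarrow$(i), Theorem~\ref{T:Nieminen} furnishes that $U,U^{-1}$ are densely defined and satisfy $U^{*}=U^{-1}$ and $(U^{-1})^{*}=U$. In particular $U$ is closed. Substituting $y=Uz$ into the adjoint identity $\sip{Ux}{y}=\sip{x}{U^{-1}y}$ produces the isometry law $\sip{Ux}{Uz}=\sip{x}{z}$ on $\dom U$, and since a closed isometry automatically has closed range, $\ran U$ is closed in $\kil$. Density of $\ran U$ comes cheaply from $(\ran U)^{\perp}=\ker U^{*}=\ker U^{-1}=\{0\}$, so in fact $\ran U=\kil$.

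The step I expect to be the main obstacle is promoting this closed, densely defined isometry to a genuine unitary operator, i.e.~showing that $\dom U=\hil$ and that $U$ is bounded. The lever is the closed graph theorem: from $\dom U^{*}=\dom U^{-1}=\ran U=\kil$, the closed operator $U^{*}$ is now everywhere defined and hence bounded; passing to the adjoint, $U=U^{**}$ is bounded and everywhere defined on $\hil$, and together with $U^{*}=U^{-1}$ this is exactly the definition of unitarity.
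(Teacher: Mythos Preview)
Your argument is correct. The application of Theorem~\ref{T:Nieminen} and the direction (i)$\Rightarrow$(ii) match the paper exactly, and your observation that $\ker U=\{0\}$ is needed so that $U^{-1}$ is a genuine operator is a point the paper leaves implicit.

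Where you diverge is in the endgame of (ii)$\Rightarrow$(i), after Theorem~\ref{T:Nieminen} delivers that $U$ is densely defined, closed, and $U^*=U^{-1}$. You argue via the isometry law: $U$ is a closed isometry, hence has closed range; the range is dense because $(\ran U)^\perp=\ker U^*=\ker U^{-1}=\{0\}$; so $\ran U=\kil$, whence $\dom U^*=\kil$, and the closed graph theorem forces $U^*$ (and then $U=U^{**}$) to be bounded and everywhere defined. The paper instead notes that $\ran U^*=\ran U^{-1}\subseteq\dom U$ and invokes the general fact $\ran U^*+\dom U=\hil$, valid for any densely defined closed $U$ (a consequence of von~Neumann's theorem that $I+U^*U$ is surjective), to conclude $\dom U=\hil$ directly. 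The paper's route is shorter but leans on a nontrivial structural fact about closed operators; your route is more self-contained, using only the isometry identity and the closed graph theorem, at the cost of a few extra lines.
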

\begin{proof}
An application of Theorem \ref{T:Nieminen} with $S\coloneqq U$ and $T\coloneqq U^{-1}$ shows that $U$ is densely defined and closed such that $U^*=U^{-1}$. Hence, $\ran U^*\subseteq \dom U$. Since we have $\ran U^*+\dom U=\hil$ for every densely defined closed operator $U$, we infer that $\dom U=\hil$ and therefore $U$ is a unitary operator.
\end{proof}

\bibliographystyle{abbrv}

\end{document}